\documentclass[12pt,a4paper,reqno]{amsart}

\usepackage{a4wide}
\usepackage{amssymb,amsmath,amsthm}
\usepackage{graphicx}
\usepackage{float}
\usepackage{colortbl}
\usepackage{enumerate}
\usepackage{upref}
\usepackage[textures,backref,bookmarks=false]{hyperref}

\parindent=0mm
\parskip=2mm

\theoremstyle{plain}
\newtheorem{theorem}{Theorem}
\newtheorem{lemma}{Lemma}[section]




\theoremstyle{definition}
\newtheorem{definition}{Definition}[section]

\newtheorem*{condition}{Condition}

\theoremstyle{remark}
\newtheorem{remark}{Remark}[section]

\def\R{\ensuremath{\mathbb R}}
\def\N{\ensuremath{\mathbb N}}

\def\e{\ensuremath{\text e}}
\def\ev{\ensuremath{E\!V}}

\def\B{\ensuremath{\mathcal B}}
\def\M{\ensuremath{\mathcal M}}

\def\l{{\rm Leb}}
\def\P{\ensuremath{\mathcal P}}

\def\X{\mathcal{X}}
\def\ie{{\em i.e.}, }

\def\dist{\ensuremath{\text{dist}}}

\def\cyl{\text{Z}}

\def\size{\hbar}

\numberwithin{equation}{section}

\begin{document}

\title[Extreme Value Laws in Dynamical Systems for Non-smooth Observations]{Extreme Value Laws in Dynamical Systems for Non-smooth Observations}

\author[A. C. M. Freitas]{Ana Cristina Moreira Freitas}
\address{Ana Cristina Moreira Freitas\\ Centro de Matem\'{a}tica \&
Faculdade de Economia da Universidade do Porto\\ Rua Dr. Roberto Frias \\
4200-464 Porto\\ Portugal} \email{amoreira@fep.up.pt}

\author[J. M. Freitas]{Jorge Milhazes Freitas}
\address{Jorge Milhazes Freitas\\ Centro de Matem\'{a}tica da Universidade do Porto\\ Rua do
Campo Alegre 687\\ 4169-007 Porto\\ Portugal}
\email{jmfreita@fc.up.pt}
\urladdr{http://www.fc.up.pt/pessoas/jmfreita}

\author[M. Todd]{Mike Todd}
\address{Mike Todd\\ Department of Mathematics and Statistics\\
Boston University\\
111 Cummington Street\\
Boston, MA 02215\\
USA \\} \email{mtodd@math.bu.edu}

\urladdr{http://math.bu.edu/people/mtodd/}

\thanks{ACMF was partially supported by FCT grant SFRH/BPD/66174/2009. JMF was partially supported by FCT grant SFRH/BPD/66040/2009. MT was partially supported by FCT grant SFRH/BPD/26521/2006 and NSF grants DMS 0606343 and DMS 0908093.  All three authors were supported by FCT through CMUP and PTDC/MAT/099493/2008.}

\date{\today}

\keywords{Return Time Statistics, Extreme Value Theory, Non-uniform
hyperbolicity, Stationary Stochastic Processes} \subjclass[2000]{37A50, 37C40, 60G10,
60G70, 37B20, 37D25}

\begin{abstract}
We prove the equivalence between the existence of a non-trivial hitting time statistics law and Extreme Value Laws in the case of dynamical systems with measures which are not absolutely continuous with respect to Lebesgue.  This is a counterpart to the result of the authors in the absolutely continuous case.  Moreover, we prove an equivalent result for returns to dynamically defined cylinders.  This allows us to show that we have Extreme Value Laws for various dynamical systems with equilibrium states with good mixing properties.  In order to achieve these goals we tailor our observables to the form of the measure at hand.
\end{abstract}

\maketitle

\section{Introduction}
Understanding extreme events is important in many fields, for example in the data analysis of climate and financial markets.  These examples can be studied using probabilistic models as well as dynamical systems models.  In this paper we prove the equivalence of some notions of extremal events in probability theory and dynamical systems, namely Extreme Value Laws and Hitting time statistics.
Our focus is on systems which are deterministic, but satisfy many of the statistical limit theorems for random processes asymptotically.  This follows on from \cite{FFT10} in which we developed this theory in a `smoother' situation.  Here we prove similar results for systems with `non-smooth' measures and for a correspondingly wider range of observations.  This gives us access to a huge range of statistical limit theorems for dynamical systems which we give some examples of at the end of the paper.  A lengthy introduction is necessary to fix the various notions we use here.  Our main results are contained in Section~\ref{subsec:main results}.

\subsection{Extreme Value Laws}
Consider a sequence of of random variables (r.v.) $X_0, X_1, \ldots$ and define a new stochastic process $M_1, M_2,\ldots$ given by
\begin{equation}
\label{eq:def-Mn}
M_n=\max\{X_0,\ldots,X_{n-1}\}.
\end{equation}
If $M_n$ has a non-degenerate weak limit law under linear normalisation, i.e., if there exist sequences  $(a_n)_{n\in\N}, (b_n)_{n\in\N}$, such that $a_n(M_n-b_n)$ converges in distribution to a non-degenerate distribution function (d.f.) $H$, then we say we have an \emph{Extreme Value Law} (EVL) for $M_n$.

When $X_0, X_1,\ldots$ is an independent and identically distributed (iid) sequence, the classical Extreme Value Theory asserts that $H$ can only be of one of the three classical EVL known as:
\begin{enumerate}[Type 1:]

\item $\ev_1(y)=\e^{-\e^{-y}}$ for $y\in\R$; this is also known
as the \emph{Gumbel} extreme value distribution (evd).

\item $\ev_2(y)=\e^{-y^{-\alpha}}$, for $y>0$, $\ev_2(y)=0$,
otherwise, where $\alpha>0$ is a parameter; this family of d.f.s is known as the \emph{Fr\'echet} evd

\item $\ev_3(y)=\e^{-(-y)^{\alpha}}$, for $y\leq0$, $\ev_3(y)=1$,
otherwise, where $\alpha>0$ is a parameter; this family of d.f.s is
known as the \emph{Weibull} evd
\end{enumerate}

In this paper the sequence of random variables $X_0,X_1,\ldots$ is generated deterministically by a discrete time dynamical system. To be more precise, consider the system $(\X,\mathcal
B,\mu,f)$, where $\X$ is a $d$-dimensional Riemannian manifold,
$\mathcal B$ is the
Borel $\sigma$-algebra, $f:\X\to\X$ is a measurable map
and $\mu$ is an $f$-invariant probability measure (which means that $\mu(f^{-1}(B))=\mu(B)$, for all $B\in\B$). We consider a Riemannian metric on $\X$
that we denote by `$\dist$' and for any $\zeta\in\X$ and $\delta>0$,
we define $B_{\delta}(\zeta)=\{x\in\X: \dist(x,\zeta)<\delta\}$. Take a
r.v. $\varphi:\X\to\R\cup\{\pm\infty\}$
achieving a global maximum at $\zeta\in \X$ (we allow
$\varphi(\zeta)=+\infty$), consider the stationary stochastic
process $X_0, X_1,\ldots$ given by
\begin{equation}
\label{eq:def-stat-stoch-proc} X_n=\varphi\circ f^n,\quad \mbox{for
each } n\in {\mathbb N}.
\end{equation}
Clearly, $X_0, X_1,\ldots$ defined in this way is not an independent sequence.  However, $f$-invariance of $\mu$ guarantees that this stochastic process is stationary.

\subsection{Hitting/Return Time Statistics}
Consider now a set $A\in\B$ and a new r.v. that we refer to as \emph{first hitting time} to $A$ and denote by $r_A:\X\to\N\cup\{+\infty\}$ where
\begin{equation}
\label{eq:def-hitting time}
r_A(x)=\min\{j\in\N\cup\{+\infty\}:\; f^j(x)\in A\}.
\end{equation}
Given a sequence of sets $\{U_n\}_{n\in \N}$ so that
$\mu(U_n) \to 0$ we define the stochastic process $r_{U_1}, r_{U_2},\ldots$ If under suitable normalisation $r_{U_n}$ converges in distribution to some non-degenerate d.f. $G$ we say that the system has \emph{Hitting Time Statistics} (HTS) for $\{U_n\}_{n\in\N}$. For systems with `good mixing properties', $G$ is the standard exponential d.f., in which case, we say that we have \emph{exponential HTS}.

We say that the system has HTS $G$ to balls at $\zeta$ if for any sequence $(\delta_n)_{n\in \N}\subset \R^+$ such that $\delta_n\to 0$ as $n\to \infty$ we have HTS $G$ for $(U_n)_n=(B_{\delta_n}(\zeta))_n$.

Let $\P_0$ denote a partition of $\X$. We define the corresponding pullback partition $\P_n=\bigvee_{i=0}^{n-1} f^{-i}(\P_0)$, where $\vee$ denotes the join of partitions. We refer to the elements of the partition $\P_n$ as cylinders of order $n$. For every $\zeta\in\X$, we denote by $\cyl_n[\zeta]$ the cylinder of order $n$ that contains $\zeta$. For some $\zeta\in \X$ this cylinder may not be unique, but we can make an arbitrary choice, so that $\cyl_n[\zeta]$ is well defined. We say that the system has HTS $G$ to cylinders at $\zeta$ if  we have HTS $G$ for $U_n=\cyl_n(\zeta)$.

Let $\mu_A$ denote the conditional measure on $A\in\B$, \ie $\mu_A:=\frac{\mu|_A}{\mu(A)}$. Instead of starting  somewhere in the whole space $\X$, we may want to start in $U_n$ and study the fluctuations of the normalised return time to $U_n$ as $n$ goes to infinity, \ie for each $n$, we look at the random variables $r_{U_n}$ as being defined in the probability space $(U_n,\B\cap U_n, \mu_{U_n})$ and wonder if, under some normalisation, they converge in distribution to some non-degenerate d.f. $\tilde G$, in which case, we say that the system has \emph{Return Time Statistics} (RTS) for $\{U_n\}_{n\in\N}$. The existence of exponential HTS is equivalent to the existence of exponential RTS. In fact, according to \cite{HLV05}, a system has HTS $G$ if and only if it has RTS $\tilde G$ with $G(t)=\int_0^t(1-\tilde G(s))\,ds $.

In \cite{FFT10}, the authors established a relation between the existence of HTS for balls and EVL for the stochastic processes defined in \eqref{eq:def-Mn} arising from the stationary sequence of random variables given by \eqref{eq:def-stat-stoch-proc}. This link was proved in the case where the invariant probability measure $\mu$ is absolutely continuous with respect to Lebesgue measure (an \emph{acip}) and allowed us to study HTS with tools of EVL and vice-versa in that setting. This connection was applied with success to non-uniformly hyperbolic systems both in the unidimensional and multidimensional cases. The main goal of this paper is to broaden the applications scenario by establishing the connection between HTS and EVL without the restraint of the invariant probability measure being absolutely continuous with respect to Lebesgue or, in other words, $\mu$ is not an acip.
This lets us study the cases where the invariant measure is an equilibrium state, for example, but not an acip.
We accomplish this generalisation by tailoring the observable to the measure $\mu$.  This yields a non-smooth observable $\varphi$.  A precise statement of our results are contained in Section~\ref{subsec:main results}.

Early results regarding laws of rare events for dynamical systems \cite{P91,H93}, showed the occurrence of exponential RTS and HTS for returns/hits to dynamical cylinders rather than to balls.
Indeed, the theory of rare events still has its widest application for this kind of returns/hits.
Therefore, another purpose of this paper is to interpret HTS for cylinders in the context of EVL. Again, this will be achieved by adapting the observable $\varphi$ to provide the desired connection, which in this case will imply adjusting the notion of EVL to apply when the convergence of $a_n(M_n-b_n)$ occurs only for particular subsequences.  Again, to see a precise statement of this theorem see Section~\ref{subsec:main results}.

\section{Normalising sequences for EVL and HTS/RTS}

If $\zeta$ belongs to the support of $\mu$ and the probability measure $\mu$ is ergodic then $M_n\to \varphi (\zeta)$ almost surely. Hence, to understand this convergence more fully, one has to find normalising sequences  $(a_n)_{n\in\N}\subset \R^+$ and
$(b_n)_{n\in\N}\subset\R$ such that
\begin{equation}
\label{eq:def-EVL} \mu\left(\{x:a_n(M_n-b_n)\le
y\}\right)=\mu\left(\{x:M_n\le u_n\}\right)\to H(y),
\end{equation}
for some non-degenerate d.f. $H$, as
$n\to\infty$.  Here the level $u_n$ is linear on $y$ \begin{equation}\label{def:level-un}u_n:=
u_n(y)=\frac{y}{a_n}+b_n\end{equation}
and must be such that $u_n\to\varphi(\zeta)$, as $n\to\infty$ in order to get a limiting law.
We refer to an event $\{X_j>u_n\}$ as an \emph{exceedance}, at time $j$, of level $u_n$ and it is clear that $\mu(X_j>u_n)\to 0$ as $n\to\infty$.

Under some mixing conditions it is possible to reduce the study of EVL for stationary stochastic processes to that of iid sequences. Hence, we motivate the choice of the normalising sequence of levels $(u_n)_{_n\in\N}$ by following the procedure in the iid case. First, to the stationary sequence $X_0,X_1,\ldots$ we associate an iid sequence of r.v.s $Y_0, Y_1,\ldots$ such that each $Y_j$ has the same d.f. of any $X_i$, for all $j,i\in\N$, and define
\begin{equation}\label{eq:def-max-iid}\hat
M_n:=\max\{Y_0,\ldots,Y_{n-1}\}.
\end{equation}
In order to compute the rate at which $u_n\to \varphi(\zeta)$ so that $\hat M_n$ is normalised, observe that by independence $\mu(\hat M_n\leq u_n)=\left(\mu(Y_0\leq u_n)\right)^n=\left(\mu(X_0\leq u_n\right)^n$ which gives
\begin{align*}
\log(\mu(\hat M_{n}\leq u_{n}))&=\log(\mu( X_0\leq u_{n})^{n})=n\log(\mu( X_0\leq u_{n}))\\
&=n\log(1-\mu( X_0>u_{n}))\sim -n\mu( X_0>u_{n})
\end{align*}
Throughout this paper the notation $A_n\sim B_n$ means that
$\lim_{n\to \infty} \frac{A_n}{B_n}=1$. Hence, if there exists some $0\leq\tau\leq\infty$ such that
\begin{equation}
\label{eq:un}
  n\mu(X_0>u_n)\to \tau,\;\mbox{ as $n\to\infty$,}
\end{equation}
then
\begin{equation}
\label{eq:iid}
  \mu(\hat M_{n}\leq u_{n})\to \e^{-\tau},\;\mbox{ as $n\to\infty$,}
\end{equation}
and the reciprocal is also true. Observe that $\tau$ depends on $y$ through $u_n$ and, in fact, depending on
the type of limit law that applies, we have that $\tau=\tau(y)$ is
of one of the following three types:
\begin{equation}\tau_1(y)=\e^{-y} \text{ for } y \in
{\mathbb R},\quad \tau_2(y)=y^{-\alpha} \text{ for } y>0\quad \text{ and }\quad
\tau_3(y)=(-y)^{\alpha} \text{ for } y\leq0.
\label{eq:tau}
\end{equation}

This takes care of the normalising sequences and so, more explicitly, we say:
\begin{definition}
\label{def:EVL}
we have an EVL $H$ for $M_n$ if \eqref{eq:def-EVL} holds for normalising sequences $a_n$, $b_n$ such that $u_n$ defined in \eqref{def:level-un} satisfies \eqref{eq:un} and $H(y)=\tilde H(\tau(y))$, where $\tau(y)$ is as in \eqref{eq:tau} and $\tilde H$ is some real valued function.
\end{definition}

Regarding normalising sequences to obtain HTS/RTS, we recall Kac's Lemma, which states that the expected value of $r_A$ with respect to $\mu_A$ is  $\int_A r_A~d\mu_A =1/\mu(A)$.  So in studying the fluctuations of $r_A$ on $A$, the relevant normalising factor should be $1/\mu(A)$.
\begin{definition}
\label{def:HTS/RTS}
Given a sequence of sets $(U_n)_{n\in \N}$ so that
$\mu(U_n) \to 0$, the system has \emph{HTS}
$G$ for $(U_n)_{n\in \N}$ if for all $t\ge 0$
\begin{equation}\label{eq:def-HTS-law}
\mu\left(r_{U_n}\geq\frac t{\mu(U_n)}\right)\to G(t) \;\mbox{ as $n\to\infty$,}
\end{equation}
and the system has \emph{RTS}
$\tilde G$ for $(U_n)_{n\in \N}$ if for all $t\ge 0$
\begin{equation}\label{eq:def-RTS-law}
\mu_{U_n}\left(r_{U_n}\geq\frac t{\mu(U_n)}\right)\to\tilde G (t)\;\mbox{ as $n\to\infty$}.
\end{equation}
\end{definition}

\section{Existence of laws of rare events }

In order to show directly the existence of EVL for dynamical systems, we refer to \cite{FF08a} where the general strategy is to prove that $X_0,X_1,\ldots$ satisfies
some mixing conditions which allow the reduction to the iid case. Following \cite{LLR83} we refer to these
conditions as $D(u_n)$ and $D'(u_n)$, where $u_n$ is the sequence of
thresholds appearing in \eqref{eq:def-EVL}. Both conditions impose
some sort of independence but while $D(u_n)$ acts on
 the long range, $D'(u_n)$ is a short
range requirement.

\begin{condition}[$D(u_n)$]\label{cond:D2}We say that $D(u_n)$ holds
for the sequence $X_0,X_1,\ldots$ if for any integers $\ell,t$
and $n$
\[ \left|\mu\left(\{X_0>u_n\}\cap
  \{\max\{X_{t},\ldots,X_{t+\ell-1}\}\leq u_n\}\right)-\mu(\{X_0>u_n\})
  \mu(\{M_{\ell}\leq u_n\})\right|\leq \gamma(n,t),
\]
where $\gamma(n,t)$ is nonincreasing in $t$ for each $n$ and
$n\gamma(n,t_n)\to0$ as $n\rightarrow\infty$ for some sequence
$t_n=o(n)$.
\end{condition}
This condition follows immediately from sufficiently fast
decay of correlations for observables which are of bounded variation or
H\"older continuous (see \cite[Section~2]{FF08a} and \cite[Lemma~6.1]{FFT10}).

By \eqref{eq:un}, the sequence $u_n$ is such that the average number of exceedances in
the time interval $\{0,\ldots,\lfloor n/k\rfloor\}$ is approximately $\tau/k$,
which goes to zero as $k\rightarrow\infty$. However, the exceedances
may have a tendency to be concentrated in the time period following
the first exceedance at time $0$. To avoid this we introduce:

\begin{condition}[$D'(u_n)$]\label{cond:D'un} We say that $D'(u_n)$
holds for the sequence $X_0,X_1,\ldots$ if
\begin{equation}
\label{eq:D'un} \lim_{k\rightarrow\infty}
\limsup_{n\rightarrow\infty}\,n\sum_{j=1}^{\lfloor n/k\rfloor}
\mu(\{X_0>u_n\}\cap
\{X_j>u_n\})=0.
\end{equation}
\end{condition}

This guarantees that the
exceedances should appear scattered through the time period
$\{0,\ldots,n-1\}$.

The main result in \cite[Theorem 1]{FF08a} states that if $D(u_n)$ and
$D'(u_n)$ hold for the process $X_0, X_1,\ldots$ and for a sequence of levels satisfying \eqref{eq:un},
then the following limits exist, and
\begin{equation}
  \label{eq:rel-hatMn-Mn}
  \lim_{n\to\infty}\mu(\hat M_n\leq u_n)=
  \lim_{n\to\infty}\mu(M_n\leq u_n).
\end{equation}

The existence of EVLs for dynamical systems is a recent topic and have been proved for non-uniformly hyperbolic systems in the pioneer paper \cite{C01}. Since then other results followed in \cite{H03,FF08,FF08a,FFT10,HNT}.

On the other hand, the theory of HTS/RTS laws is now a well developed theory, applied first to cylinders and hyperbolic dynamics, and then extended to balls and also to non-uniformly hyperbolic systems. We refer to \cite{C00} and \cite{S09} for very nice reviews as well as plenty of references on the subject.  (See also \cite{AG01}, where the focus is more towards a finer analysis of uniformly hyperbolic systems.)  Several different approaches have been used to prove HTS/RTS: from the analysis of adapted Perron-Frobenius operators in \cite{H93}, the use of inducing schemes in \cite{BST03,BV03,BT09a}, to the relation between recurrence rates and dimension as explained in \cite[Section~4]{S09}.  We would like to give particular mention to \cite{HSV99} in which general mixing conditions were introduced, under which exponential HTS/RTS hold. These conditions are related to quantities denoted by $a_N(U)$ and $b_N(U)$ in \cite[Lemma~2.4]{HSV99}.  It turns out that $D'(u_n)$ is closely related to $a_N(U)$ in the sense that both require some sort of short range independence while $D(u_n)$ is linked to $b_N(U)$ and imposes some mixing type of behaviour.
\section{The choice of observables.}

We assume that the observable $\varphi:\X\to\R\cup\{+\infty\}$ is of
the form
\begin{equation}
\label{eq:observable-form} \varphi(x)=g\left(\mu\big(B_{\dist(x,\zeta)}(\zeta)\big)\right),
\end{equation} where $\zeta$ is a chosen point in the
phase space $\X$ and the function $g:[0,+\infty)\rightarrow {\mathbb
R\cup\{+\infty\}}$ is such that $0$ is a global maximum ($g(0)$ may
be $+\infty$); $g$ is a strictly decreasing bijection $g:V \to W$
in a neighbourhood $V$ of
$0$; and has one of the
following three types of behaviour:
\begin{enumerate}[Type 1:]
\item there exists some strictly positive function
$p:W\to\R$ such that for all $y\in\R$
\begin{equation}\label{eq:def-g1}\displaystyle \lim_{s\to
g_1(0)}\frac{g_1^{-1}(s+yp(s))}{g_1^{-1}(s)}=\e^{-y};
\end{equation}
\item $g_2(0)=+\infty$ and there exists $\beta>0$ such that
for all $y>0$
\begin{equation}\label{eq:def-g2}\displaystyle \lim_{s\to+\infty}
\frac{g_2^{-1}(sy)}{g_2^{-1}(s)}=y^{-\beta};\end{equation}
\item $g_3(0)=D<+\infty$ and there exists $\gamma>0$ such
that for all $y>0$
\begin{equation}\label{eq:def-g3}\lim_{s\to0}
\frac{g_3^{-1}(D-sy)}{g_3^{-1}(D-s)}= y^\gamma.
\end{equation}
\end{enumerate}

Examples of each one of the three types are as follows:
$g_1(x)=-\log x$ (in this case \eqref{eq:def-g1} is easily verified
with $p\equiv1$), $g_2(x)=x^{-1/\alpha}$ for some $\alpha>0$ (condition
\eqref{eq:def-g2} is verified with $\beta=\alpha$) and
$g_3(x)=D-x^{1/\alpha}$ for some $D\in\R$ and $\alpha> 0$ (condition
\eqref{eq:def-g3} is verified with $\gamma=\alpha$).

In \cite{FFT10} we assumed that $\varphi(x)=g\big(\dist(x,\zeta)\big)$. Since the invariant measure there was an acip, using Lebesgue's differentiation theorem, we could write $\mu(B_\eta (\zeta))\sim \rho (\zeta) \l\left(B_\eta(\zeta)\right)$, where we assume that $\rho(\zeta)=\frac{d\mu}{\l}(\zeta)>0$ and Lebesgue's differentiation theorem applies to $\zeta$. In here, since $\mu$ may not be an acip the function $\size$ defined for small $\eta\geq 0$ and given by
\begin{equation}
\label{eq:def-h}
\size(\eta)=\mu(B_\eta (\zeta))
\end{equation}
may not be absolutely continuous. However, we require that $\size$ is continuous on $\eta$. For example, if $\X$ is an interval and $\mu$ a Borel probability with no atoms,\ie points with positive $\mu$ measure, then $\size$ is continuous. 
One of our applications is to equilibrium states, which we explain in Section~\ref{sec:eq}.  A major difference here is that although $g$ is invertible in a small neighbourhood of $0$, the function $\size$ does not have to be. This means that, in contrast with \cite{FFT10}, the observable $\varphi$, as a function of the distance to $\zeta$, may not be invertible in any small neighbourhood of $\zeta$.

For that reason, we now define
\begin{equation}
\label{eq:l}\ell(\gamma):=\inf\{\eta>0:\mu(B_\eta(\zeta))=\gamma\}.
\end{equation}
In particular, we have
\begin{equation}
\label{eq:l-property}
\mu\left(B_{\ell(\gamma)}(\zeta)\right)=\gamma.
\end{equation}

\section{Limit laws for cylinders}
\label{sec:cyl}

In order to make the connection between HTS/RTS and EVL for cylinders we make a suitable choice of the observable $\phi$, which, in this case, we set it be of the form
\begin{equation}
\label{eq:def-observable-cylinders}
\varphi=g_i\circ \psi,
\end{equation}
where $g_i$ is one of the three forms given above and $\psi(x):= \mu(\cyl_n[\zeta])$ where $n$ is maximal such that $x\in \cyl_n[\zeta]$.

The highly irregular behaviour of $\psi$ leads us to an adjustment of the definition of EVL, which we will refer to as a \emph{cylinder EVL}. The problem arises with the possible nonexistence of a sequence of levels $u_n$ such that \eqref{eq:un} holds.

To illustrate the problem and to motivate our definition of EVL for cylinders we consider the so-called \emph{full tent map} $f:[0,1]\to[0,1]$ given by $$f(x)=1-|2x-1|,$$ with the partition $\mathcal P_0=\left\{\left[0,\frac12\right],\left(\frac12,1\right]\right\}$.  This is the situation considered in \cite{H03} and, in many aspects, is as good as it gets. For definiteness take $\zeta=1$, $g_2(x)=1/x$ and stipulate that $g_2(0)=+\infty$. In this case, it is easy to check that Lebesgue measure is invariant hence we assume that $\mu$ stands for Lebesgue measure on $[0,1]$. Besides, for every $j\in\N$,  we have $Z_j(\zeta)=(1-2^{-j},1]$ and $\mu(Z_j(\zeta))=2^{-j}$. Let $F$ denote the d.f. of $X_0$, \ie $F(y)=\mu(X_0\leq y)$. Observe that $F$ is discontinuous. In fact, at every $y_j=2^j$, with $j\in\N$, the d.f. $F$ has a jump of size $2^{-j}$. These jumps at $y_j$ are too big when compared to $1-F(y_j)$ and make it impossible to find a sequence $u_n$ such that \eqref{eq:un} holds for some $\tau>0$. The natural candidate here would be to take $u_n=2^{[\log_2 n]}$. However, $n\mu(X_0>u_n)=n(1-F(u_n))=\frac n{2^{[\log_2 n]}}$ oscillates too much to have a limit.  This phenomenon also occurs for general choices of $\zeta$ and for general dynamical systems.

Also, the Shannon-McMillan-Breimann Theorem says that if the metric entropy $h_\mu$ is positive, then for $\mu$-a.e. $\zeta$, the cylinders $\cyl_n[\zeta]\in \P_n$ satisfy
$$\lim_{n\to \infty}\frac{-\log\mu(\cyl_n[\zeta])}n \to h_\mu.$$
This means that even for `well behaved' systems such as the full tent map, $n\mu(X_0>u_n)$ can fluctuate wildly since $\mu(\cyl_n[\zeta])\sim\e^{-h_\mu n}$, which creates jumps in the tail of the d.f. $F$ which are too big when compared to the value of the tail of $F$ at the jumps.  Indeed in the special case of the full tent map, we also have
\begin{equation}
\frac{\mu(\cyl_n[\zeta])}{e^{-nh_\mu}}=1.\label{eq:tent no fluc}
\end{equation}
(More generally, for more complicated measures and systems, this quantity also fluctuates wildly in $n$, see Remark~\ref{rmk:Gibbs fluc} for a note on the situation for Gibbs measures.)

A possible solution for this issue is to take a subsequence of the time $n$, which we denote by $(\omega_n)_{n\in\N}$ and  such that
\begin{equation}
  \label{eq:wn-condition}
  \omega_n \mu(X_0>u_n)\xrightarrow[n\to\infty]{}\tau>0.
  \end{equation}
So, for the full tent map, for any $\tau>0$, one could take for example:
\begin{equation}
\label{eq:tent-un-wn}
\omega_n=[\tau 2^n]\qquad\mbox{and}\qquad u_n=2^n,
\end{equation} and we would get that $\omega_n\mu(X_0>u_n)= [\tau 2^n] 2^{-n}$ converges to $\tau>0$.

\begin{remark}
The choice of $(\omega_n)_n$ for the full tent map is of a particularly nice form: $[\tau e^{\alpha n}]$.  This follows since all $n$-cylinders have equal measure, as in \eqref{eq:tent no fluc}. This is far from the general situation, in which we would expect $\frac{\mu(\cyl_n[\zeta])}{e^{-nh_\mu}}$ to fluctuate wildly.  For this reason, there is no general way of choosing $\omega_n$ to be of the form $[\tau e^{\alpha n}]$ for some fixed $\alpha$ which can depend on $\zeta$.
\end{remark}

Moreover, as for the case of balls:
\begin{lemma}
If $\omega_n \mu(X_0>u_n)\xrightarrow[n\to\infty]{}\tau\geq 0$, then $\lim_{n\to \infty}\mu\left(\hat M_{\omega_n} \le u_n\right) \to e^{-\tau}.$
\label{lem:iid-cyl}
\end{lemma}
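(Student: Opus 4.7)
The plan is to imitate the iid computation already displayed before \eqref{eq:un}, with $\omega_n$ playing the role of $n$. Since $Y_0, Y_1, \ldots$ is iid with the same marginal distribution as $X_0$, independence gives
\[
\mu\!\left(\hat M_{\omega_n}\leq u_n\right) = \mu(Y_0\leq u_n)^{\omega_n} = \bigl(1-\mu(X_0>u_n)\bigr)^{\omega_n},
\]
so it suffices to show $\omega_n \log\bigl(1-\mu(X_0>u_n)\bigr)\to -\tau$.

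The first preparatory step is to check that $\mu(X_0>u_n)\to 0$. By the full tent map discussion we may assume $\omega_n\to\infty$ (this is how such sequences are chosen; indeed for any $\tau\geq 0$ one must have $\omega_n\to\infty$, since otherwise $\omega_n$ would be bounded and, combined with $\omega_n\mu(X_0>u_n)\to\tau$, would force $\mu(X_0>u_n)$ to stay bounded away from $0$ along a subsequence, contradicting $u_n\to\varphi(\zeta)$ and the fact that $\mu(X_0>u_n)\to 0$ whenever $u_n$ tends to the essential supremum of $\varphi$). Since $\omega_n\to\infty$ and $\omega_n\mu(X_0>u_n)\to\tau<\infty$, dividing shows $\mu(X_0>u_n)\to 0$.

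The second step is a one-line Taylor expansion: for $x\to 0$, $\log(1-x)=-x+O(x^2)$, so
\[
\omega_n \log\bigl(1-\mu(X_0>u_n)\bigr) = -\omega_n \mu(X_0>u_n) + O\!\left(\omega_n \mu(X_0>u_n)^2\right).
\]
The main term converges to $-\tau$ by hypothesis, and the error is bounded by $\bigl(\omega_n\mu(X_0>u_n)\bigr)\cdot \mu(X_0>u_n)$, which tends to $\tau\cdot 0=0$. Exponentiating yields $\mu(\hat M_{\omega_n}\leq u_n)\to e^{-\tau}$.

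There is essentially no obstacle: this is just the classical iid normalisation argument, and the substitution of $\omega_n$ for $n$ is transparent once one has checked $\omega_n\to\infty$ and $\mu(X_0>u_n)\to 0$. The only case worth flagging is $\tau=0$, where the same computation gives the limit $e^{0}=1$ without change.
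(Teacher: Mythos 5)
Your proof is correct and follows the same route as the paper's two-line argument: reduce to the iid product formula $\mu(\hat M_{\omega_n}\leq u_n)=\bigl(1-\mu(X_0>u_n)\bigr)^{\omega_n}$ and then apply $\log(1-x)\sim -x$ as $x\to 0$. The extra care you take to verify $\mu(X_0>u_n)\to 0$ is left implicit in the paper (in the paper's setting it is immediate from $\{X_0>u_n\}=\cyl_n[\zeta]$ with $\mu(\cyl_n[\zeta])\to 0$), though your stated justification is slightly circular, invoking ``the fact that $\mu(X_0>u_n)\to 0$'' while in the middle of establishing it.
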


\begin{proof}
Recall that
$\mu\left(\hat M_{\omega_n} \le u_n\right) = \left(1-\mu\left(X>u_n\right)\right)^{\omega_n}.$
Since $$\log \left(1-\mu\left(X>u_n\right)\right)^{\omega_n} \sim -\omega_n\mu\left(X>u_n\right),$$
the lemma follows.
\end{proof}

In the case of EVLs for observations compatible with balls, as in the standard EVT setting, we took samples of $M_n$ at times $n=1, 2, \dots$ and so on.  This fitted in with the natural scaling given by the measures of the balls, which were of order $1/n$.  When our observables are compatible with cylinders, the time scale $\omega_1, \omega_2, \ldots$ should be the reciprocal of the measure of the cylinders, which on average decay exponentially fast.

However, a new complication emerges with this strategy that we already bypassed in the full tent map case with our choice of $\omega_n$ and $u_n$ in \eqref{eq:tent-un-wn}. By definition EVLs are limit laws for the maxima under linear normalisation which means that $u_n$ is of the form \eqref{def:level-un} which depends on a factor $y$. In the full tent map case, if we were to choose instead $\omega_n=2^n$ and $u_n=2^n y$ (which is the typical choice for an observable $g_2$), using Lemma~\ref{lem:iid-cyl}, we would be led to the limit law $H(y)=\e^{-2^{-[\log_2y]}}$ which is not continuous and not one of the classical $\ev_i$, $i=1,2,3$. In fact, since $\omega_n$ is not linear in $n$, we cannot obtain a max stable law (see Remark~\ref{rem:max-stable} for definition) with this last type of normalisation.

Hence, while previously we built the dependence of $\mu(X>u_n)$ on $\tau$ into $(u_n)_{n\in\N}$,  in this setting it is necessary, if our results are to hold for general dynamical systems, to build the dependence on $\tau$ into the time scale. For every $n\in\N$, $\tau\geq 0$, let $u_n$ be such that
\begin{equation}
\label{eq:un-cylinders}
\{X_0>u_n\}=\cyl_n[\zeta]
\end{equation} and  set
\begin{equation}
\label{eq:wn-definition}
\omega_n=\omega_n(\tau)=[\tau \left(\mu(X_0>u_n)\right)^{-1}].
\end{equation}

Finally, we say that we have a \emph{cylinder EVL} $H$ for the maximum if  for any sequence $(u_n)_{n\in\N}$ such that \eqref{eq:un-cylinders} holds and for $\omega_n$ defined in \eqref{eq:wn-definition}, the limit \eqref{eq:wn-condition} holds and
\[
\mu\left(M_{\omega_n}\le u_n\right)\to H(\tau),
\]
for some non-degenerate d.f. $H$, as
$n\to\infty$.

It is clear that the existence of an EVL for balls is a rather stronger statement then the existence of a cylinder EVL for a particular system since the later only requires convergence on certain suitable subsequences. We mention that Haiman obtained an exponential cylinder EVL for the full tent map. In his paper $\zeta=1/2$, $g_2(x)=1-|2x-1|$ and \cite[Theorem~2]{H03} states that
$$\lim_{n\to \infty}\mu\left(M_{[\tau 2^n]}\le 1-2^{-n}\right) =e^{-\tau}.$$
However, it is possible to show that the full tent map admits an actual EVL for balls centred on the vertex \cite{F09}.

In order to prove the existence of an exponential cylinder EVL it is enough to check conditions $D$ and $D'$ on the subsequence $\omega_n$: let $u_n$ and $\omega_n$ be defined as in \eqref{eq:un-cylinders} and \eqref{eq:wn-definition}, respectively, and consider the conditions:
\begin{condition}[$D(u_n,\omega_n)$]\label{cond:D-cyl}We say that $D(u_n,\omega_n)$ holds
for the sequence $X_0,X_1,\ldots$ if for any integers $\ell,t$
and $n$
\[ \left|\mu\left(\{X_0>u_n\}\cap
  \{\max\{X_{t},\ldots,X_{t+\ell-1}\}\leq u_n\}\right)-\mu(\{X_0>u_n\})
  \mu(\{M_{\ell}\leq u_n\})\right|\leq \gamma(n,t),
\]
where $\gamma(n,t)$ is nonincreasing in $t$ for each $n$ and
$\omega_n\gamma(n,t_n)\to0$ as $n\rightarrow\infty$ for some sequence
$t_n=o(\omega_n)$;
\end{condition}
\begin{condition}[$D'(u_n,\omega_n)$]\label{cond:D'un-cyl} We say that $D'(u_n,\omega_n)$
holds for the sequence $X_0,X_1,\ldots$ if
\begin{equation}
\label{eq:D'un-cyl} \lim_{k\rightarrow\infty}
\limsup_{n\rightarrow\infty}\,\omega_n\sum_{j=1}^{\lfloor \omega_n/k\rfloor}
\mu(\{X_0>u_n\}\cap
\{X_j>u_n\})=0.
\end{equation}
\end{condition}

If $D(u_n,\omega_n)$ and $D'(u_n,\omega_n)$ hold then
\begin{equation}\label{eq:cylinder-exp-EVL}
\mu\left(M_{\omega_n}\le u_n\right)\to \e^{-\tau},\quad \mbox{as $n\to\infty$.}
\end{equation}
The proof of this statement follows from Lemma~\ref{lem:iid-cyl} and a straightforward adaption of the argument in the proof of \cite[Theorem 1]{FF08a}

\begin{remark}
\label{rem:max-stable}
We say that a nondegenerate d.f.s $H$ is max-stable if, for each $n=2,3,\ldots$, there are constants $a_n>0$ and $b_n$ such that $H^n(a_n x+b_n)=H(x)$. A nondegenerate function $H$ is \emph{max-stable} if and only if there is a sequence $(F_n)_{n\in\N}$ of d.f.s and constants $a_n>0$ and $b_n$ such that $F_n(a_{nk}^{-1}x+b_{nk})\rightarrow H^{1/k}(x)$ as $n\rightarrow\infty$, for each $k=1,2,\ldots$. As a consequence of this result we can see that the class of nondegenerate d.f. which appear as limit laws in (\ref{eq:def-EVL}) coincides with the class of max-stable d.f.s.
\end{remark}

\begin{remark}
We can also apply the above theory to so-called `dynamical balls', also known as `Bowen balls'.  For a dynamical system $f:\X\to \X$, a point $\zeta\in\X$ and $\varepsilon>0$, the set $B_n(\zeta, \varepsilon):=\{y\in \X:d(f^j(x), f^j(y))<\varepsilon \text{ for every } 1\le j\le n\}$ is an $(n,\varepsilon)$-dynamical ball around $\zeta$.  Recurrence for this type of ball was studied, for example, in \cite{V09}.
\end{remark}

\section{Main Results}
\label{subsec:main results}

Our first main result, which obtains EVLs from HTS for balls, is the following.

\begin{theorem}
\label{thm:HTS-implies-EVL} Let $(\X,\mathcal B, \mu,f)$ be a
dynamical system, $\zeta\in\X$ be in the support of $\mu$ and assume that $\mu$ is such that the function $\size$ defined on \ref{eq:def-h} is continuous.
\begin{itemize}
\item If we have HTS $G$ to balls centred on $\zeta\in\X$,
then we have an EVL $H$ for $M_n$ which applies to the observables
\eqref{eq:observable-form} achieving a maximum at $\zeta$, where $H(y)=G(\tau(y))$ and $\tau$ is of one of the forms $\tau_i$ given in \eqref{eq:tau}.

\item If we have exponential HTS ($G(t)=\e^{-t}$)
to balls at 
$\zeta\in\X$, then we have an EVL for $M_n$ which coincides with that of $\hat M_n$ (meaning that \eqref{eq:rel-hatMn-Mn} holds). In particular, this EVL must be one of the 3 classical types. Moreover, if $g$ is of type $g_i$,  for some $i\in\{1,2,3\}$, then we have an
EVL for $M_n$ of type $\ev_i$.
\end{itemize}
\end{theorem}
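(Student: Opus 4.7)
The plan is to identify exceedance sets $\{X_0>u\}$ with geometric balls around $\zeta$, and then recognise $\{M_n\le u_n\}$ as a hitting-time event to which the HTS hypothesis applies. The first step exploits that $g$ is a strictly decreasing bijection near $0$, that $\size$ is continuous, and that $\size$ is monotone in the radius, to obtain the identification
\[
\{X_0>u\}=B_{v(u)}(\zeta),\qquad v(u):=\ell(g^{-1}(u)),
\]
valid modulo a $\mu$-null set for $u$ close enough to $g(0)$. Together with \eqref{eq:l-property} this gives $\mu(X_0>u)=g^{-1}(u)$.

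Given $\tau=\tau_i(y)$ as in \eqref{eq:tau}, I would then build normalising sequences $(a_n)_n$, $(b_n)_n$ so that the levels $u_n=y/a_n+b_n$ satisfy \eqref{eq:un}, i.e.\ $n\mu(X_0>u_n)=n g^{-1}(u_n)\to\tau$. For type $g_1$ set $b_n:=g_1(1/n)$ and $a_n:=1/p(b_n)$; then $u_n=b_n+yp(b_n)$ and \eqref{eq:def-g1} yields $ng_1^{-1}(u_n)\to \e^{-y}=\tau_1(y)$. For type $g_2$ set $b_n\equiv 0$ and $a_n:=1/g_2(1/n)$; then \eqref{eq:def-g2} gives $ng_2^{-1}(u_n)\to y^{-\beta}=\tau_2(y)$, forcing $\alpha=\beta$. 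For type $g_3$ set $b_n:=D$ and $a_n:=1/(D-g_3(1/n))$; then \eqref{eq:def-g3} gives $ng_3^{-1}(u_n)\to(-y)^\gamma=\tau_3(y)$, forcing $\alpha=\gamma$.

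Finally, writing $U_n:=B_{v(u_n)}(\zeta)$ and using the identification above,
\[
\{M_n\le u_n\}=\{X_0\le u_n\}\cap\{r_{U_n}\ge n\},
\]
and since $\mu(\{X_0>u_n\}\cap\{r_{U_n}\ge n\})\le\mu(X_0>u_n)\to 0$, one has $\mu(M_n\le u_n)=\mu(r_{U_n}\ge n)+o(1)$. Setting $t_n:=n\mu(U_n)\to\tau$, the HTS hypothesis $\mu(r_{U_n}\ge t/\mu(U_n))\to G(t)$ together with a sandwich at continuity points of $G$ yields $\mu(r_{U_n}\ge n)\to G(\tau)$, hence $\mu(M_n\le u_n)\to G(\tau(y))=H(y)$; this proves the first bullet. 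The second bullet follows immediately by inserting $G(t)=\e^{-t}$ and noting $\e^{-\tau_i(y)}=\ev_i(y)$, while coincidence of the limiting laws of $M_n$ and $\hat M_n$ is then \eqref{eq:iid} applied to our $u_n$.

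The main obstacle is the first step. Since $\size$ need not be injective, one must use the continuity assumption and the definition of $\ell$ carefully to identify $\{X_0>u\}$ with a ball and to verify that the boundary contributes no $\mu$-mass; and although the constructed $u_n$ are manifestly linear in $y$, ensuring that they qualify as a linear normalisation in the sense of \eqref{def:level-un} for each of the three types is where the quoted asymptotic properties \eqref{eq:def-g1}--\eqref{eq:def-g3} of $g_i^{-1}$ are fully used.
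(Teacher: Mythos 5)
Your proposal follows essentially the same route as the paper: identify the exceedance set $\{X_0>u\}$ with the ball $B_{\ell(g^{-1}(u))}(\zeta)$ using \eqref{eq:l-property}, choose the same linear normalising sequences $u_n$ for the three types of $g$, translate $\{M_n\le u_n\}$ into the hitting-time event $\{r_{U_n}\ge n\}$, and then invoke the HTS hypothesis. The one place where you genuinely diverge is the final limiting step. You pass from $\mu(r_{U_n}\ge n)$ to $G(\tau)$ by a sandwich argument that requires $\tau$ to be a continuity point of $G$, whereas the paper instead uses the elementary set inclusion \eqref{eq:diff-r-m-k} together with stationarity to get the quantitative bound
\[
\left|\mu\left(r_{U_n}\ge n\right)-\mu\left(r_{U_n}\ge (1+\varepsilon_n)n\right)\right|\le |\varepsilon_n|\,n\,\mu(U_n)\sim|\varepsilon_n|\tau\to 0,
\]
with $(1+\varepsilon_n)n=\tau/\mu(U_n)$ and $\varepsilon_n\to0$. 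This gives pointwise convergence $\mu(M_n\le u_n)\to G(\tau(y))$ at \emph{every} $y$, with no continuity assumption on $G$; since the theorem asserts $H(y)=G(\tau(y))$ for all $y$ and $G$ is a priori only a monotone function, your sandwich leaves a (small, countable) set of $\tau$-values uncovered. The quantitative estimate is a cleaner and slightly stronger tool here, and you already have all the ingredients for it (stationarity and the measure of the balls), so I would recommend swapping it in. The rest of your argument — in particular the observations that $\mu(X_0>u)=g^{-1}(u)$, that $\ell(g^{-1}(u_n))\to 0$ because $\zeta\in\mathrm{supp}\,\mu$, and the type-$g_i$ computations showing $n\mu(X_0>u_n)\to\tau_i(y)$ — matches the paper, and your handling of the second bullet via \eqref{eq:un}$\Leftrightarrow$\eqref{eq:iid} is also the paper's; just be sure, as the paper does, to verify the degenerate cases $y\le 0$ for $g_2$ and $y\ge 0$ for $g_3$ so the limit really is $\ev_i$ on the whole real line.
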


Now, we state a result in the other direction, \ie we show how to  get HTS from EVLs for balls.

\begin{theorem}
\label{thm:EVL=>HTS} Let $(\X,\mathcal B, \mu,f)$ be a
dynamical system, $\zeta\in\X$ be in the support of $\mu$ and assume that $\mu$ is such that the function $\size$ defined in \ref{eq:def-h} is continuous.
\begin{itemize}

\item If we have an EVL $H$  for $M_n$ which applies to the observables
\eqref{eq:observable-form} achieving a maximum at $\zeta\in\X$ then
we have HTS $G$ to balls at $\zeta$, where $H(y)=G(\tau (y))$ and $\tau$ is of one of the forms $\tau_i$ given in \eqref{eq:tau}.

\item If we have an EVL for $M_n$ which coincides with that of $\hat M_n$, then
we have exponential HTS ($G(t)=\e^{-t}$) to balls at $\zeta$.
\end{itemize}
\end{theorem}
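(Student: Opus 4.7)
The strategy is to invert the argument of Theorem~\ref{thm:HTS-implies-EVL}. Given a sequence of balls $U_n = B_{\delta_n}(\zeta)$ with $\delta_n \downarrow 0$ and a continuity point $t>0$ of the prospective law $G$, set $v_n = g(\mu(U_n))$ and $N_n = \lfloor t/\mu(U_n)\rfloor$; the aim is $\mu(r_{U_n} \ge N_n) \to G(t)$. First I identify the hitting event with a maximum event geometrically: since $g$ is a strictly decreasing bijection near $0$, $\{X_0 > v_n\} = \{x: \size(\dist(x,\zeta)) < \size(\delta_n)\}$, which in view of continuity of $\size$ and \eqref{eq:l} equals the ball $B_{\ell(\mu(U_n))}(\zeta)$. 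This ball sits inside $U_n$ and differs from it by a $\mu$-null annulus of measure $\size(\delta_n) - \size(\ell(\mu(U_n))) = 0$. By $f$-invariance the orbit events $\{f^j(x)\in U_n\}$ and $\{f^j(x)\in \{X_0>v_n\}\}$ agree modulo null sets for every $j$, and a one-iterate book-keeping yields $\bigl|\mu(r_{U_n}\ge N_n) - \mu(M_{N_n}\le v_n)\bigr| \le \mu(U_n) \to 0$.

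Next I connect $\mu(M_{N_n}\le v_n)$ to the EVL. By construction $N_n \mu(X_0 > v_n) = N_n \mu(U_n) \to t$, but the EVL hypothesis only delivers limits along the canonical linearly normalised levels $u_n(y) = y/a_n + b_n$; this mismatch is the principal technical obstacle. To handle it I use that each admissible $\tau$ in \eqref{eq:tau} is continuous and strictly decreasing onto $(0,\infty)$: pick $y_0$ with $\tau(y_0)=t$, and for any small $\epsilon>0$ choose $y_\pm = y_0 \pm \epsilon$ at continuity points of $H$. Then $\tau(y_-)>t>\tau(y_+)$, and since $N_n \mu(X_0 > u_{N_n}(y_\pm)) \to \tau(y_\pm)$, the monotonicity of $w \mapsto \mu(X_0>w)$ forces $u_{N_n}(y_-) \le v_n \le u_{N_n}(y_+)$ for $n$ large. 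Consequently
\[
\mu(M_{N_n}\le u_{N_n}(y_-)) \le \mu(M_{N_n}\le v_n) \le \mu(M_{N_n}\le u_{N_n}(y_+)),
\]
and applying the EVL along the subsequence $N_n \to \infty$ gives $H(y_-) \le \liminf_n \mu(M_{N_n}\le v_n) \le \limsup_n \mu(M_{N_n}\le v_n) \le H(y_+)$. Letting $\epsilon\to 0$ and using continuity of $\tilde H$ at $t$ forces both limits to coincide at $\tilde H(t) = G(t)$.

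Combining the geometric step with the sandwich gives $\mu(r_{U_n}\ge N_n) \to G(t)$, proving HTS $G$ to balls at $\zeta$ with $H(y) = G(\tau(y))$. The second bullet follows immediately by specialising to $\tilde H(\tau) = \e^{-\tau}$, whence $G(t) = \e^{-t}$.
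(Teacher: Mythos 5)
Your proof is correct and it is a genuinely different route from the paper's. The paper defines the threshold via the linear normalising form, $u_{\kappa_n}=u_{\kappa_n}(y)$, which makes the EVL hypothesis directly applicable, but then the associated ball $B_{\ell(g^{-1}(u_{\kappa_n}))}(\zeta)$ is only approximately equal to $B_{\delta_n}(\zeta)$; the authors bound the discrepancy via $\kappa_n\mu\bigl(B_{\delta_n}(\zeta)\bigtriangleup B_{\ell(g^{-1}(u_{\kappa_n}))}(\zeta)\bigr)\to 0$, after first invoking Khintchine's theorem to reduce to the canonical normalising sequences built in Lemma~\ref{lem:t from tau}. You invert that trade-off: by taking $v_n=g(\mu(U_n))$ the exceedance set $\{X_0>v_n\}=B_{\ell(\mu(U_n))}(\zeta)$ matches $U_n$ exactly up to a $\mu$-null annulus, so no geometric approximation is needed; the price is that $v_n$ is not of the form $y/a_n+b_n$, and you recover the EVL limit by sandwiching $v_n$ between $u_{N_n}(y_0\mp\epsilon)$ using monotonicity of $w\mapsto\mu(X_0>w)$ together with the fact (built into Definition~\ref{def:EVL}) that $n\mu(X_0>u_n(y))\to\tau(y)$. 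This eliminates the appeal to Khintchine's theorem and to Lemma~\ref{lem:t from tau} entirely. The only cost is that your final squeeze uses continuity of $\tilde H=G$ at $t$, so strictly you obtain the limit only at continuity points of $G$ (and hence convergence in distribution, which is all that the notion of HTS requires; and for the second bullet $G=\e^{-t}$ is continuous so there is nothing lost), whereas the paper's version delivers the pointwise limit at every $t$ in the range of $\tau$ without that caveat.
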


Finally, we state a result relating cylinder EVLs and HTS for cylinders.

\begin{theorem}
\label{thm:cylinderEVL-HTS}
Let $(\X,\mathcal B, \mu,f)$ be a
dynamical system, $\zeta\in\X$  be in the support of $\mu$. We have a cylinder EVL $H$ for the maximum, where the observable is given by \ref{eq:def-observable-cylinders}, if and only if we have HTS $H$ to cylinders which is to say that
\[\lim_{n\to \infty}
\mu\left(M_{\omega_n} \le u_n\right) = H(t)=\lim_{n\to \infty}\mu\left(r_{\cyl_n[\zeta]}\geq\frac t{\mu(\cyl_n[\zeta])}\right),
\]
for the sequences $(u_n)_{n\in\N},(\omega_n)_{n\in\N}$ such that \eqref{eq:un-cylinders} and \eqref{eq:wn-definition} hold.

Moreover, if $H(t)=\e^{-t}$ then we may replace $M_{\omega_n}$ by $\hat M_{\omega_n}$ and/or $\mu$ by $\mu_{\cyl_n[\zeta]}$ in the equation above.

\end{theorem}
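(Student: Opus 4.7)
The plan is to reduce both sides of the claimed equivalence to essentially the same quantity up to an error that vanishes as $n\to\infty$. The whole argument rests on the tautological identity $\{X_j>u_n\}=\{f^j(x)\in \cyl_n[\zeta]\}$ coming from the choice of $u_n$ in \eqref{eq:un-cylinders}.

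The first step is to compare the events $\{M_{\omega_n}\le u_n\}$ and $\{r_{\cyl_n[\zeta]}\ge \omega_n\}$. Both say that the orbit of $x$ avoids $\cyl_n[\zeta]$ throughout a specific window of times; the only difference is whether the time $j=0$ is included. Hence their symmetric difference is contained in $\cyl_n[\zeta]$ and has $\mu$-measure at most $\mu(\cyl_n[\zeta])\to 0$.

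The second step is to compare $\{r_{\cyl_n[\zeta]}\ge \omega_n\}$ with $\{r_{\cyl_n[\zeta]}\ge t/\mu(\cyl_n[\zeta])\}$. Since $r_{\cyl_n[\zeta]}$ is integer valued and $0\le t/\mu(\cyl_n[\zeta])-\omega_n<1$, these two events differ at most on $\{r_{\cyl_n[\zeta]}=\omega_n\}\subseteq f^{-\omega_n}(\cyl_n[\zeta])$, which by $f$-invariance of $\mu$ has measure $\mu(\cyl_n[\zeta])\to 0$. Concatenating the two estimates gives
\[\bigl|\mu(M_{\omega_n}\le u_n)-\mu\bigl(r_{\cyl_n[\zeta]}\ge t/\mu(\cyl_n[\zeta])\bigr)\bigr|\le 2\mu(\cyl_n[\zeta])\longrightarrow 0,\]
so one limit exists if and only if the other does, and in that case they agree. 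This settles the main equivalence.

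For the ``moreover'' clause, when $H(t)=\e^{-t}$, Lemma~\ref{lem:iid-cyl} delivers $\mu(\hat M_{\omega_n}\le u_n)\to\e^{-t}$ directly (because by construction \eqref{eq:wn-definition} one has $\omega_n\mu(X_0>u_n)\to t$), allowing the replacement of $M_{\omega_n}$ on the left-hand side by $\hat M_{\omega_n}$. The replacement of $\mu$ by $\mu_{\cyl_n[\zeta]}$ on the right-hand side is the standard equivalence between exponential HTS and exponential RTS from \cite{HLV05} recalled in the introduction. No serious obstacle is expected: the proof is really a bookkeeping exercise, made straightforward because the normalisations in \eqref{eq:un-cylinders}--\eqref{eq:wn-definition} are engineered precisely so that every discrepancy is controlled by $\mu(\cyl_n[\zeta])$.
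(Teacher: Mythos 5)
Your proof is correct and follows essentially the same route as the paper's: identify $\{M_{\omega_n}\le u_n\}$ with $\{r_{\cyl_n[\zeta]}\ge\omega_n\}$ via \eqref{eq:un-cylinders}, then use the fact that $|\omega_n-t/\mu(\cyl_n[\zeta])|<1$ together with \eqref{eq:diff-r-m-k} and $f$-invariance to show the two limits agree, and invoke Lemma~\ref{lem:iid-cyl} and \cite{HLV05} for the exponential case. The only cosmetic difference is that you explicitly keep track of the possible off-by-one discrepancy at time $j=0$ (arriving at a $2\mu(\cyl_n[\zeta])$ bound), while the paper treats the two events as equal and gets a $\mu(\cyl_n[\zeta])$ bound; both tend to zero, so the conclusion is identical.
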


\section{Proofs}
\label{sec:proofs HTS EVL}

In this section we prove Theorems~\ref{thm:HTS-implies-EVL},
\ref{thm:EVL=>HTS} and \ref{thm:cylinderEVL-HTS}.

\begin{proof}[Proof of Theorem~\ref{thm:HTS-implies-EVL}]  Set
\begin{align*}
u_n&=g_1\left(n^{-1}\right)
+p\left(g_1\left(n^{-1}\right)\right)
y,&&\mbox{for $y\in {\mathbb R}$, for type
$g_1$;}\\
 u_n&=g_2\left(n^{-1}\right)y,&&\mbox{for $y>0$, for type
$g_2$;}\\
u_n&=D-\left(D-g_3\left(n^{-1}\right)\right)(-y),
&&\mbox{for $y<0$, for type $g_3$.}
\end{align*}

For $n$ sufficiently large,
\begin{align}
\{x:M_n(x)\leq u_n\}&=\bigcap_{j=0}^{n-1}\{x:X_j(x)\leq u_n\}
=\bigcap_{j=0}^{n-1}\left\{x:g\left(\mu\left(B_{\dist(f^j(x),\zeta)}(\zeta)\right)\right)\leq u_n\right\}\nonumber\\
&=\bigcap_{j=0}^{n-1}\left\{x:\mu\left(B_{\dist(f^j(x),\zeta)}(\zeta)\right)\geq g^{-1}(u_n)\right\} \label{eq:rel-max-returns_1}.
\end{align}

Consequently, by (\ref{eq:l-property}),
\begin{align}
\mu(\{x:M_n(x)\leq u_n\})&=\mu\left(\bigcap_{j=0}^{n-1}\left\{x:\mu\{B_{\dist(f^j(x),\zeta)}
(\zeta)\}\geq \mu\{B_{\ell(g^{-1}(u_n))}
(\zeta)\} \right\}\right)\nonumber\\
&=\mu\left(\bigcap_{j=0}^{n-1}\left\{x:{\dist(f^j(x),\zeta)}
\geq {\ell(g^{-1}(u_n))}
 \right\}\right)\nonumber\\
 &=\mu\left(\left\{x:{r_{B_{\ell(g^{-1}(u_n))}(\zeta)}(x)}
\geq n\right\}\right).
\label{eq:rel-max-returns_2}
\end{align}

Now, observe that \eqref{eq:def-g1}, \eqref{eq:def-g2} and \eqref{eq:def-g3} imply
\begin{align*}
g_1^{-1}(u_n)&=g_1^{-1}\left[g_1(n^{-1})
+p\left(g_1(n^{-1})\right)
y\right]\sim g_1^{-1}
\left[g_1(n^{-1})\right]\e^{-y}
=\frac{\e^{-y}}{n};\\
g_2^{-1}(u_n)&=g_2^{-1}
\left[g_2(n^{-1})
y\right]\sim g_2^{-1}
\left[g_2(n^{-1})
\right]y^{-\beta}
=\frac{y^{-\beta}}{n};\\
g_3^{-1}(u_n)&=g_3^{-1}
\left[D-\left(D-g_3(n^{-1})\right)
(-y)\right]
\sim g_3^{-1}\left[
D-\left(D-g_3(n^{-1}\right)
\right] (-y)^{\gamma}=\frac{(-y)^{\gamma
}}{n}.
\end{align*}
Thus, we may write
$$g^{-1}(u_n)\sim\frac{\tau(y)}{n},$$
meaning that
$$g_i^{-1}(u_n)\sim\frac{\tau_i(y)}{n},
 \;\; \forall i\in \{1,2,3\}$$
where $\tau_1(y)=\e^{-y}$ for $y \in {\mathbb R}$, $\tau_2(y)=
y^{-\beta }$ for $y>0$, and $\tau_3(y)=(-y)^{\gamma }$ for $y<0$.

Recalling (\ref{eq:l-property}), we have
\[
\mu\left(B_{\ell(g^{-1}(u_n))}(\zeta)\right)\sim\frac{\tau(y)}{n},
\]
and so,
\begin{equation}
\label{eq:n-estimate}n\sim
\frac{\tau(y)}{\mu\left(B_{\ell(g^{-1}(u_n))}(\zeta)\right)}.
\end{equation}

 Now, we claim that using \eqref{eq:rel-max-returns_2} and
\eqref{eq:n-estimate}, we have
\begin{align}
\label{eq:approximation-1} \lim_{n\to\infty}\mu(M_n(x)\leq
u_n)&=\lim_{n\to\infty}
\mu\left(r_{B_{l(g^{-1}(u_n))}(\zeta)}(x)\geq
\frac{\tau(y)}{\mu\left(B_{\ell(g^{-1}(u_n))}(\zeta)\right)}\right)\\
&= G(\tau(y))\label{eq:conclusion-1},
\end{align}
which gives the first part of the theorem.

To see that \eqref{eq:approximation-1} holds, observe that by
\eqref{eq:rel-max-returns_2} and \eqref{eq:n-estimate} we have
\begin{multline*}
 \left|\mu(M_n\leq
u_n)-\mu\left(r_{B_{\ell(g^{-1}(u_n))}(\zeta)}\geq
\frac{\tau(y)}{\mu\left(B_{\ell(g^{-1}(u_n))}(\zeta)\right)}\right)
\right|
\\ =
\left| \mu\left(r_{B_{\ell(g^{-1}(u_n))}(\zeta)}\geq
n\right)-\mu\left(r_{B_{\ell(g^{-1}(u_n))}(\zeta)}\geq
(1+\varepsilon_n)n\right)\right|,
\end{multline*}
where $(\varepsilon_n)_{n\in\N}$ is such that $\varepsilon_n\to0$
as $n\to\infty$. Since we have
\begin{equation}
\label{eq:diff-r-m-k}\left\{r_{B_{\ell(g^{-1}(u_n))}(\zeta)}\geq
m\right\}\setminus\left\{r_{B_{\ell(g^{-1}(u_n))}(\zeta)}\geq
m+k\right\}\subset \bigcup_{j=m}^{m+k-1} f^{-j}\left(
B_{\ell(g^{-1}(u_n))}(\zeta)\right),\;\mbox{$\forall
m,k\in\N$,}\end{equation} it follows by stationarity that
\begin{multline*}
 \left|
\mu\left(r_{B_{\ell(g^{-1}(u_n))}(\zeta)}\geq
n\right)-\mu\left(r_{B_{\ell(g^{-1}(u_n))}(\zeta)}\geq
(1+\varepsilon_n)n\right)\right|\\ \leq |\varepsilon_n|n\mu\left(
B_{\ell(g^{-1}(u_n))}(\zeta)\right)\sim |\varepsilon_n|\tau \to 0,
\end{multline*}
as $n\to\infty$, completing the proof of \eqref{eq:approximation-1}.

Next we will use the exponential HTS hypothesis, that is
$G(t)=\e^{-t}$, to show the second part of the theorem.

Under the exponential HTS assumption, by \eqref{eq:conclusion-1}
it follows immediately that    $\mu(M_n(x)\leq
u_n)\to\e^{-\tau(y)}$, as $n\to\infty$.  Recall that in the corresponding i.i.d setting, i.e. when we are considering $\{x:\hat M_n(x)\leq u_n\}$ rather than $\{x: M_n(x)\leq u_n\}$, \eqref{eq:un} is equivalent to \eqref{eq:iid}.  Therefore we also have $\lim_{n\to\infty}\mu(\hat M_n(x)\leq u_n)=\e^{-\tau(y)}$, since $n\mu(X_0>u_n)=n\mu\left(B_{\ell(g^{-1}(u_n))}(\zeta)\right)\to\tau(y)$, as $n\to\infty$.
As explained in the introduction, this means that in the i.i.d. setting $G(\tau)$ must be of the three classical types.
It remains to show that if the observable is of type $g_i$ then $\lim_{n\to\infty}\mu(M_n(x)\leq
u_n)=\e^{-\tau(y)}$ means that the
EVL that applies to $M_n$ (rather than $\hat M_n$) is also of type $\ev_i$, for each
$i\in\{1,2,3\}$.

\textbf{Type $\mathbf{g_1}$:} In this case we have
$\e^{-\tau_1(y)}=\e^{-\e^{-y}}$, for all $y\in {\mathbb R}$, that
corresponds to the Gumbel evd and so we have an EVL for $M_n$ of
type $\ev_1$.

\textbf{Type $\mathbf{g_2}$:} We obtain $\e^{-\tau_2(y)}=\e^{-y^{-\beta}}$
for $y>0$.  To conclude that in this case we have the Fr\'{e}chet
evd with parameter $\beta$, we only have to check that for
$y\leq0$, $\mu(M_n(x)\leq u_n)=0$. Since
$g_2(n^{-1})>0$ (for
all large $n$) and $$\mu(M_n(x)\leq
u_n)=\mu\left(M_n(x)\leq
g_2(n^{-1}) y\right)\rightarrow\e^{-y^{-\beta }}$$ as
$n\to\infty$. Letting $y\downarrow 0$, it follows that
$\mu(M_n(x)\leq 0)\rightarrow 0$, and, for $y<0$,
$$\mu(M_n(x)\leq u_n)=\mu\left(M_n(x)\leq
g_2(n^{-1})y\right)\leq
\mu(M_n(x)\leq 0)\rightarrow 0.$$ So, we have, in this case, an EVL for $M_n$  of type $\ev_2$.

\textbf{Type $\mathbf{g_3}$:} For $y<0$, we have
$\e^{-\tau_3(y)}=\e^{-(-y)^{\gamma}}$.  To conclude that in this
case we have the Weibull evd with parameter $\gamma$, we only
need  to check that for $y\geq0$, $\mu(M_n(x)\leq u_n)=1$. In
fact, for $y\geq0$, since
$D-g_3(n^{-1})>0$, we
have \begin{align*} \mu(M_n(x)\leq u_n)&
=\mu\left(M_n(x)\leq
\left(D-g_3(n^{-1})
\right)y+D\right)\\
&\geq \mu(M_n(x)\leq D)=1.\end{align*} So we have, in this
case, an EVL for $M_n$  of type $\ev_3$.
\end{proof}

For the proof of Theorem~\ref{thm:EVL=>HTS}, we will require the following lemma.  This is essentially contained in \cite[Theorem~1.6.2]{LLR83}.  See also \cite[Lemma 2.1]{FFT10} where the lemma was proved for acips.  We provide a proof in the general case for completeness.
\begin{lemma}
Let $(\X,\mathcal B, \mu,f)$ be a
dynamical system, $\zeta\in\X$ and assume that $\mu$ is such that the function $\size$ defined on \eqref{eq:def-h} is continuous. Furthermore, let $\varphi$ be as in \eqref{eq:observable-form}.  Then, for each $y\in\R$, there exists a sequence $(u_n(y))_{n\in\N}$ as in \eqref{def:level-un} such that
$$n\mu(\{x:\varphi(x)>u_n(y)\}) \xrightarrow[n\to\infty]{} \tau(y)\geq 0.$$
Moreover, for every $t>0$ there exists $y\in\R$ such that $\tau(y)=t$.
\label{lem:t from tau}
\end{lemma}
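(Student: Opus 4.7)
The proof will recycle the explicit calculation carried out inside the proof of Theorem~\ref{thm:HTS-implies-EVL}. For a given $y$ one defines $u_n(y)$ by the same linear formulas used there: $u_n(y)=g_1(1/n)+p(g_1(1/n))\,y$ in the $g_1$ case ($y\in\R$), $u_n(y)=g_2(1/n)\,y$ in the $g_2$ case ($y>0$), and $u_n(y)=D-(D-g_3(1/n))(-y)$ in the $g_3$ case ($y<0$). Each is of the linear form \eqref{def:level-un}, and in each case $u_n(y)\to g(0)$ as $n\to\infty$.

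The main step is to verify that for $u$ sufficiently close to $g(0)$, $\mu(\{x:\varphi(x)>u\})=g^{-1}(u)$. Since $g$ is a strictly decreasing bijection $V\to W$ on a neighbourhood of $0$, for such $u$ the inequality $\varphi(x)>u$ is equivalent to $\size(\dist(x,\zeta))<g^{-1}(u)$. Using continuity and monotonicity of $\size$ together with the definition \eqref{eq:l} of $\ell$, a two-sided check (an $x$ with $\dist(x,\zeta)\ge\ell(g^{-1}(u))$ has $\size$-value $\ge g^{-1}(u)$ by monotonicity, and an $x$ with $\dist(x,\zeta)<\ell(g^{-1}(u))$ has $\size$-value strictly below $g^{-1}(u)$ by the infimum definition of $\ell$ combined with continuity) shows that the exceedance set is exactly the open ball $B_{\ell(g^{-1}(u))}(\zeta)$. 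Property \eqref{eq:l-property} then gives $\mu(\{\varphi>u\})=\size(\ell(g^{-1}(u)))=g^{-1}(u)$.

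With this identification, applying the asymptotic relations \eqref{eq:def-g1}, \eqref{eq:def-g2}, \eqref{eq:def-g3} to $g^{-1}(u_n(y))$ exactly as in the chain of asymptotics inside the proof of Theorem~\ref{thm:HTS-implies-EVL} yields $g^{-1}(u_n(y))\sim \tau(y)/n$, with $\tau$ equal to $\tau_1$, $\tau_2$ or $\tau_3$ from \eqref{eq:tau} according to the type of $g$. Multiplying by $n$ gives the required $n\mu(\{\varphi>u_n(y)\})\to\tau(y)\ge 0$. The last sentence of the lemma is immediate: each of $\tau_1\colon\R\to(0,\infty)$, $\tau_2\colon(0,\infty)\to(0,\infty)$ and $\tau_3\colon(-\infty,0)\to(0,\infty)$ is a continuous bijection, so for any $t>0$ one may solve $\tau(y)=t$ explicitly.

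The main technical obstacle lies in the middle paragraph: since $\mu$ is not assumed absolutely continuous, $\size$ need not be strictly increasing, so one cannot simply treat it as an invertible function of the radius and, a priori, the set $\{\varphi>u\}$ could differ from a ball on a sphere carrying positive measure. Continuity of $\size$, however, forces $\mu$ to assign zero mass to the sphere of radius $\ell(g^{-1}(u))$ and makes $\size\circ\ell$ coincide with the identity on the range of $\size$; this is precisely what upgrades the set-theoretic identification to the equality $\mu(\{\varphi>u\})=g^{-1}(u)$ on which the rest of the argument rests.
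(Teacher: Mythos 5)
Your proof is correct, but it takes a genuinely different route from the paper's. The paper follows the classical extreme value theory recipe: it constructs the normalising sequence from the quantile function, setting $\gamma_n=\inf\{y:\mu(X_0\le y)\ge 1-1/n\}$, proves $n\mu(X_0>\gamma_n)\to 1$ by a two-sided argument (the $\limsup\le 1$ bound is definitional, the $\liminf\ge 1$ bound uses the regular-variation condition \eqref{eq:def-g2}), and then scales by $y$; it writes this out only for type $g_2$ and refers to \cite[Theorem~1.6.2]{LLR83} for the other two types. You instead reuse the explicit levels $u_n(y)$ from the proof of Theorem~\ref{thm:HTS-implies-EVL} and make the identity $\mu(\{\varphi>u\})=g^{-1}(u)$ the centrepiece; once that identity is in hand, the conclusion $n\mu(\{\varphi>u_n(y)\})\to\tau_i(y)$ drops out of \eqref{eq:def-g1}--\eqref{eq:def-g3} for all three types simultaneously, and surjectivity of each $\tau_i$ onto $(0,\infty)$ gives the last assertion. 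Your approach buys explicitness (no quantile construction, exact rather than merely asymptotic control of $\mu(X_0>\cdot)$) and treats the three types uniformly; the paper's version conforms to the standard probabilistic template in which one only knows the tail asymptotically, and is the argument one would use if the d.f.\ were not computable in closed form. One small remark on your final paragraph: the set-theoretic identity $\{\varphi>u\}=B_{\ell(g^{-1}(u))}(\zeta)$ together with \eqref{eq:l-property}, i.e.\ $\size\circ\ell=\mathrm{id}$ on the range of $\size$, already yields $\mu(\{\varphi>u\})=g^{-1}(u)$; the fact that spheres carry zero $\mu$-mass is an equivalent reformulation of continuity of $\size$ (which is automatically left-continuous, so the content of the hypothesis is right-continuity) rather than an independent ingredient of the upgrade. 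The paper itself quietly uses the same identity $\mu(\{\varphi>u_n\})=g^{-1}(u_n)$ inside the proof of Theorem~\ref{thm:EVL=>HTS}, so your version could be seen as promoting that computation to the role of a lemma.
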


\begin{proof}
We will prove the lemma in the case when $g$ is of type $g_2$.  For the other two types of $g$, the argument is the same, but with minor adjustments, see \cite[Theorem~1.6.2]{LLR83}.

First we show that we can always find a sequence $(\gamma_n)_{n\in\N}$ such that $$n\mu(X_0>\gamma_n)\xrightarrow[n\to\infty]{}1.$$ Take $\gamma_n:=\inf\{y:\mu(X_0\leq y)\geq 1-1/n\},$ and let us show that it has the desired property. Note that $n\mu(X_0>\gamma_n)\leq 1$, which means that $\limsup_{n\to \infty} n\mu(X_0>\gamma_n)\leq 1$. Using \eqref{eq:def-g2}, for any $z<1$, we have
\[
\liminf_{n\to\infty}\frac{\mu(X_0>\gamma_n)}{\mu(X_0>\gamma_n z)}=\liminf_{n\to\infty}\frac{\mu(B_{\ell(g_2^{-1}(\gamma_n))}(\zeta))}
{\mu(B_{\ell(g_2^{-1}(z\gamma_n))}(\zeta))}=
\liminf_{n\to\infty}\frac{{g_2^{-1}(\gamma_n)}}
{{g_2^{-1}(z\gamma_n)}}=z^{\beta},
\]
where $\ell$ is the function defined in (\ref{eq:l}).
Since, by definition of $\gamma_n$, for any $z<1$, $n\mu(X_0>\gamma_n z)\geq1$, letting $z\to 1$, it follows immediately that $\liminf_{n\to \infty} n\mu(X_0>\gamma_n)\geq 1$.

Now let $u_n(y)=\gamma_n y$, which means that, for all $n\in\N$, we are taking $a_n=\gamma_n^{-1}$ and $b_n=0$ in \eqref{def:level-un}. Then, using \eqref{eq:def-g2}, it follows that for all $y>0$
\begin{align*}
  n\mu(X_0>\gamma_n y)&=n\mu(B_{l(g_2^{-1}(\gamma_n y))}(\zeta))= n g_2^{-1}(\gamma_ny)\\
  &\sim n y^{-\beta}g_2^{-1}(\gamma_n)=
  y^{-\beta}n \mu(B_{\ell(g_2^{-1}(\gamma_n))}(\zeta))
  = y^{-\beta}n\mu(X_0>\gamma_n)\xrightarrow[n\to\infty]{}y^{-\beta}.
\end{align*}
So taking $y=t^{-1/\beta}>0$ would suit our purposes.
\end{proof}

\begin{proof}[Proof of Theorem~\ref{thm:EVL=>HTS}.]
We assume that by hypothesis for every $y\in\R$ and some sequence
$u_n=u_n(y)$ as in \eqref{def:level-un} such that $n \mu\left(\{x:\varphi(x)>
u_n(y)\}\right)\xrightarrow[n\to\infty]{}\tau(y)$,  we have
$$\lim_{n\to\infty}\mu\left(\{x:M_n(x)\leq
u_n(y)\}\right)=H(\tau(y)).$$
Observe that, by Khintchine's Theorem (see \cite[Theorem~1.2.3]{LLR83}), up to linear scaling the normalising sequences are unique, which means that we may assume that they are the ones given by Lemma~\ref{lem:t from tau}. Hence given $t>0$, Lemma~\ref{lem:t from tau} implies that there exists $y\in \R$ such that
$$n \mu\left(\{x:\varphi(x)>
u_n(y)\}\right)\xrightarrow[n\to\infty]{}t.$$
Given $(\delta_n)_{n\in\N}\subset \R^+$ with
$\delta_n\xrightarrow[n\to\infty]{}0$, we define $$\kappa_n:=\lfloor
t/\mu(B_{\delta_n}(\zeta))\rfloor.$$

We will prove
\begin{equation}
\label{eq:aux-1} g^{-1}\left(u_{\kappa_n}\right)\sim\mu(B_{\delta_n}(\zeta)).
\end{equation}

If $n$ is sufficiently large, then
\[
\left\{x:\varphi(x)>u_n\right\}=\left\{x:g(\mu(B_{\dist(x,\zeta)}(\zeta)))>u_n\right\}
=\left\{x:\mu(B_{\dist(x,\zeta)}(\zeta))<g^{-1} (u_n)\right\}.
\]

By \eqref{eq:l-property} and the definition of $\ell$ in \eqref{eq:l} we obtain
\begin{align*}
\mu(\{x:\varphi(x)>u_n\})&=\mu\left(\left\{x: \mu(B_{\dist(x,\zeta)}
(\zeta))< g^{-1}(u_n)\right\} \right)\\
&=\mu\left(\left\{x: \mu(B_{\dist(x,\zeta)}
(\zeta))< \mu(B_{\ell(g^{-1}(u_n))}
(\zeta))\right\} \right)\\
&=\mu\left(\left\{x:{\dist(x,\zeta)}
< {\ell(g^{-1}(u_n))}
 \right\}\right)\\
&=\mu\left( B_{\ell(g^{-1}(u_n))}(\zeta) \right).
\end{align*}

Hence, by assumption on the sequence $u_n$, we have
$n\mu\left(B_{\ell(g^{-1}(u_n))}(\zeta)\right)\xrightarrow[n\to\infty]{}
\tau(y)=t$. As we know that  $\mu\left( B_{\ell(g^{-1}(u_n))}(\zeta)\right)=g^{-1}(u_n)$, we have
$ng^{-1}(u_n)\xrightarrow[n\to\infty]{}
t$. Thus, we may write $g^{-1}(u_n)\sim \frac
t{n}$ and substituting $n$ by $\kappa_n$ we immediately
obtain \eqref{eq:aux-1} by definition of $\kappa_n$.

Again, by the definition of $\ell$ in \eqref{eq:l} and \eqref{eq:l-property} we note that
\begin{align}
\mu(\{x:M_{\kappa_n}(x)\leq u_{\kappa_n}\})&=\mu\left(\bigcap_{j=0}^{\kappa_n-1}\left\{x:\mu\{B_{\dist(f^j(x),\zeta)}
(\zeta)\}\geq g^{-1}(u_{\kappa_n}) \right\}\right)\nonumber\\
&=\mu\left(\bigcap_{j=0}^{{\kappa_n}-1}\left\{x:\mu\{B_{\dist(f^j(x),\zeta)}
(\zeta)\}\geq \mu\{B_{\ell(g^{-1}(u_{\kappa_n}))}
(\zeta)\} \right\}\right)\nonumber\\
&=\mu\left(\bigcap_{j=0}^{{\kappa_n}-1}\left\{x:{\dist(f^j(x),\zeta)}
\geq {\ell(g^{-1}(u_{\kappa_n}))}
 \right\}\right)\nonumber\\
 &=\mu\left(\left\{x:{r_{B_{\ell(g^{-1}(u_{\kappa_n}))}(\zeta)}(x)}
\geq {\kappa_n}\right\}\right).
\label{eq:Mln-rgeqln}
\end{align}

At this point, we claim that
\begin{equation}
\label{eq:approximation-2}
\lim_{n\to\infty}\mu\left(\left\{x:r_{B_{\delta_n}(\zeta)}(x)\geq\frac t
{\mu(B_{\delta_n}(\zeta))}\right\}\right)=\lim_{n\to\infty}
\mu(\{x:M_{\kappa_n}(x)\leq u_{\kappa_n}\}).
\end{equation}
Then, the first part of the theorem follows, since
by hypothesis,$$\mu\left(\{x:M_{\kappa_n}(x)\leq
u_{\kappa_n}\}\right)\xrightarrow[n\to\infty]{}H(\tau(y))=H(t).$$

For the second part of the theorem, first notice that for the iid setting, i.e. when we are considering $\{x:\hat M_n(x)\leq u_n\}$ rather than $\{x: M_n(x)\leq u_n\}$, \eqref{eq:un} is equivalent to \eqref{eq:iid}.  Therefore, $\mu(\{x:\hat M_n(x)\leq u_n\})\to \e^{-\tau(y)}$ as $n\to \infty$.  Hence if the EVL of $M_n$ coincides with that of $\hat M_n$, then we also have $H(\tau(y))=\e^{-\tau(y)}$.

It remains to show that \eqref{eq:approximation-2} holds. First, observe that
\begin{align*}
  \mu\left(r_{B_{\delta_n}(\zeta)}\geq\frac t
{\mu(B_{\delta_n}(\zeta))}\right)&=
\mu(M_{\kappa_n}\leq u_{\kappa_n})+
\left(\mu\left(r_{B_{\delta_n}(\zeta)}\geq \kappa_n\right)-
\mu(M_{\kappa_n}\leq u_{\kappa_n})\right)\\
&\quad+\left(\mu\left(r_{B_{\delta_n}(\zeta)}\geq\frac t
{\mu(B_{\delta_n}(\zeta))}\right)-\mu\left(r_{B_{\delta_n}(\zeta)}\geq
\kappa_n\right)\right).
\end{align*}

For the third term on the right, we note, by the definition of $\kappa_n$ that we have
\begin{multline*}
 \left|\mu\left(r_{B_{\delta_n}(\zeta)}\geq \kappa_n\right)-
 \mu\left(r_{B_{\delta_n}(\zeta)}\geq\frac t
{\mu(B_{\delta_n}(\zeta))}\right) \right|
\\ =
\left| \mu\left(r_{B_{\delta_n}(\zeta)}\geq
\kappa_n\right)- \mu\left(r_{B_{\delta_n}(\zeta)}\geq
(1+\varepsilon_n)\kappa_n\right)\right|,
\end{multline*}
for some sequence $(\varepsilon_n)_{n\in\N}$ such that
$\varepsilon_n\to0$, as $n\to\infty$. By \eqref{eq:diff-r-m-k} and stationarity it follows that
\begin{equation*}
 \left| \mu\left(r_{B_{\delta_n}(\zeta)}\geq \kappa_n\right)-
 \mu\left(r_{B_{\delta_n}(\zeta)}\geq (1+\varepsilon_n)\kappa_n\right)
 \right|
  \leq |\varepsilon_n|\kappa_n\mu(
B_{\delta_n}(\zeta))\sim|\varepsilon_n|t \to 0,
\end{equation*}
as $n\to\infty$.

For the remaining term, using the definition of $\kappa_n$ and
\eqref{eq:Mln-rgeqln}, we have
\begin{align*}
\left|\mu\left(\left\{r_{B_{\delta_n}(\zeta)}\geq \kappa_n\right\}\right)-\mu\left(\{M_{\kappa_n}\leq u_{\kappa_n}\}\right)\right|&=
\left|\mu\left(\left\{r_{B_{\delta_n}(\zeta)}\geq \kappa_n\right\}\right)-\mu\left(\{r_{B_{\ell(g^{-1}(u_{\kappa_n}))}(\zeta)}\geq \kappa_n\}\right)\right|\\
&\leq \sum_{i=1}^{\kappa_n}\mu\left(f^{-i}\left(B_{\delta_n}(\zeta)\bigtriangleup
B_{\ell(g^{-1}(u_{\kappa_n}))}(\zeta)\right)\right)\\
&=\kappa_n\mu\left(B_{\delta_n}(\zeta)\bigtriangleup
B_{\ell(g^{-1}(u_{\kappa_n}))}(\zeta)\right)\\
&\sim \frac t{\mu\left(B_{\delta_n}(\zeta)\right)}\left|\mu\left(B_{\delta_n}(\zeta)
\right)-
\mu\left(B_{\ell(g^{-1}(u_{\kappa_n}))}(\zeta)\right)\right|\\
&=t\left|1-\frac{\mu\left(B_{\ell(g^{-1}(u_{\kappa_n}))}(\zeta)\right)}{
\mu\left(B_{\delta_n}(\zeta)\right)}\right|,
\end{align*}
which, by (\ref{eq:l-property}) and (\ref{eq:aux-1}), tends to $0$ as $n\to\infty$; this ends the proof of \eqref{eq:approximation-2}.
\end{proof}

\begin{proof}[Proof of Theorem~\ref{thm:cylinderEVL-HTS}]  For every $n\in\N$, set
$
u_n=g\left(\mu\left(\cyl_{n-1}[\zeta]\right)\right).
$
Then, by definition of $\psi$, it follows immediately that, for large $n\in\N$
\begin{align*}
\{X_0>u_n\}&=\left\{x: \psi(x)<g^{-1}(u_n)\right\}=\left\{x: \psi(x)<\mu\left(\cyl_{n-1}[\zeta]\right)\right\}=\cyl_n[\zeta],
\end{align*}
which means that condition (\ref{eq:un-cylinders}) is verified. Let $\omega_n$ be defined by (\ref{eq:wn-definition}). It is also clear that
\begin{align*}
\{M_{\omega_n}\leq u_n\}&=\bigcap_{j=0}^{\omega_n-1}\{x: g\left(\psi\left(f^j(x)\right)\right)\leq g^{-1}(u_n)\}=
\bigcap_{j=0}^{\omega_n-1} \left\{x: \psi(f^j(x))<g^{-1}(u_n)\right\}\\
&=\bigcap_{j=0}^{\omega_n-1} \left\{x: \psi(f^j(x))\geq\mu\left(\cyl_{n-1}[\zeta]\right)\right\}=\bigcap_{j=0}^{\omega_n-1}\left\{x: f^j(x)\notin\cyl_n[\zeta] \right\}=\left\{r_{\cyl_n[\zeta]}\geq\omega_n\right\}.
\end{align*}
Since $\left|\omega_n-\frac t{\mu(\cyl_n[\zeta])}\right|\leq 1$, recalling \eqref{eq:diff-r-m-k} and using stationarity  we have
\begin{align*}
\left|\mu(M_{\omega_n}\leq u_n)-\mu\left(r_{\cyl_n[\zeta]}\geq\frac t{\mu(\cyl_n[\zeta])}\right)\right|&=\left|\mu(r_{\cyl_n[\zeta]}\geq\omega_n)-\mu\left(r_{\cyl_n[\zeta]}\geq\frac t{\mu(\cyl_n[\zeta])}\right)\right|\\
&\leq\mu(\cyl_n[\zeta])\xrightarrow[n\to\infty]{}0.
\end{align*}

Now, the result follows at once.
\end{proof}

\section{Applications}
\label{sec:aps}

In this section we describe the types of systems that Theorems~\ref{thm:HTS-implies-EVL}-\ref{thm:cylinderEVL-HTS} apply to, specifically when the measures are equilibrium states.  Thus the equivalences given in those theorems yield new EVLs and HTS in those settings.

\subsection{Equilibrium states and SRB measures}
\label{sec:eq}

For a discrete time dynamical system $f:X \to X$, we let $$\M=\M(f):=\left\{\text{measures }\mu: \mu\circ f^{-1}=\mu \text{ and } \mu(X)=1\right\}.$$  Given a \emph{potential} $\phi: X \to [-\infty, \infty]$, the \emph{pressure} of $\phi$ with respect to $f$ is defined as
\begin{equation*}
P(\phi) =P(f,\phi):= \sup \left\{ h_\mu + \int \phi~d\mu :  \mu \in \M \textrm{ and } - \int \phi~d\mu < \infty\right\},
\end{equation*}
where $ h_\mu$ denotes the measure theoretic entropy of $f$ with respect to $\mu$.   
A measure $\mu\in \M$ which `achieves the pressure', i.e. with $h_\mu+\int\phi~d\mu=P(\phi)$, is called an \emph{equilibrium state}.
For example, if we set $\phi$ to be a constant, then the relevant equilibrium state is the measure of maximal entropy.

If $\P_1$ is a partition of $X$ and we refine the partition to obtain $\P_n = \bigvee_{i=0}^{n-1} f^{-i} \P_1$ as above.  Let $S_n \phi(x) := \sum_{k=0}^{n-1} \phi \circ f^k(x)$ be the $n$-th ergodic
sum along the orbit of $x$.
We say that $\mu$ satisfies the \emph{Gibbs property}
if for $\mu$-a.e. $x\in X$ there are $K,P\in \R$ such that
\begin{equation}\label{eq:Gibbs}
\frac{1}{K} \le \frac{\mu(\cyl_n[x])}{e^{S_n \phi(x)-nP}}
\le K.
\end{equation}

\begin{remark}
Notice that \eqref{eq:Gibbs} implies that the $\mu$-measure of $n$-cylinders around a $\mu$-typical point $x$ fluctuates as much as $S_n\phi(x)$ fluctuates.  In general for non-constant potentials, the Law of the Iterated Logarithm (see \cite{DP84}) implies that $\frac{S_n\phi(x)}n$ has liminf and limsup equal to 0 and $\infty$ respectively.
Observe that the potential $\phi$ for the full tent map in Section~\ref{sec:cyl} is the constant $\log 2$.
\label{rmk:Gibbs fluc}
\end{remark}

\emph{Sinai-Ruelle-Bowen measures} (SRB measures) are often used to analyse dissipative chaotic systems. Assume that
$f:X \to X$ is a $C^2$
diffeomorphism of a finite dimensional manifold $X$ with a volume form defined on the Borel sets of $X$ that we call Lebesgue measure. We have in mind dissipative systems that present chaotic strange attractors such as the families of Lozi maps \cite{L78} or H\'enon maps \cite{H76}. The fact that these systems contract volume rules out the possibility of invariant measures equivalent to Lebesgue. SRB measures are like the next best thing when no invariant measure equivalent to volume exists. Their relation with volume is that conditional to unstable manifolds they are absolutely continuous with respect to the conditional Lebesgue measure on those leaves.
We do not give a formal definition of these measures: instead we refer the reader to \cite{Y02} which contains a very complete description.
However, we would like to emphasise that these measures, at least in the examples we mention here, are not absolutely continuous with respect to Lebesgue measure, do not have atoms and can even be realised as equilibrium states for a certain potentials.  Therefore the current paper provides tools to understand HTS and EVLs for these measures.



%

\subsection{Particular systems to which Theorems~\ref{thm:HTS-implies-EVL}-\ref{thm:cylinderEVL-HTS} apply}

Exponential HTS to cylinders have been shown for many hyperbolic systems.  This was first shown for Axiom A maps $f:X\to X$ with equilibrium state $\mu_\phi$ with respect to a H\"older potential $\phi:X\to \R$ in \cite{H93} (see also \cite{P91} for the Markov chain setting).  In this case the equilibrium state $\mu_\phi$ is a Gibbs state which satisfies a mixing condition called `$\alpha$-mixing', for details see \cite{HSV99} and \cite{AG01}.  A theory for HTS to cylinders for various dynamical systems with Gibbs states with various mixing conditions, such as $\alpha$-mixing, can be found in for example \cite{AG01, HSV99, AS10}.  One of the issues of interest in these cases is the rate of convergence to the exponential law.

So in all of the above cases we can apply Theorem~\ref{thm:cylinderEVL-HTS} to get EVLs for cylinders.

The problem of proving HTS to \emph{balls} in dimension higher than one is often complicated by the fact that the measure of small balls can behave badly as the size of the ball shrinks. 
Mainly because of this, much more is known for HTS to balls in one dimension.  For example if $f:I\to I$ is a multimodal map of the unit interval $I$ and $\phi=-t\log|Df|$ is a potential with an equilibrium state $\mu_t$ (see \cite{IT09} for the most general result in on existence of such equilibrium states), it was proved in \cite{BT09a} that the system has exponential HTS to balls.  A similar result was proved for H\"older potentials $\phi:I \to \R$ with $\phi<P(\phi)$.  These results also hold for Manneville-Pomeau maps, see for example \cite{BST03}.  Then Theorem~\ref{thm:HTS-implies-EVL} applies to all these cases to obtain EVLs.

In \cite{GNH09} the authors considered among other systems the family of Lozi maps, which have SRB measures $\mu$ that are not absolutely continuous with respect to Lebesgue, and prove the existence of EVLs of the classical type for $\mu$-a.e. point in the support of $\mu$. In this setting, we can apply Theorem~\ref{thm:EVL=>HTS} to obtain HTS for Lozi maps.

\subsection{Non exponential laws}

In \cite{CF96} it is shown that for an irrational circle rotation $(S^1, f)$ with Lebesgue measure there are subsequences $(n_i)_{i\in \N}$ and $(k_j)_{j\in \N}$ such that the HTS to nested cylinders along these subsequences are $G_1$ and $G_2$ respectively.  Here $G_1$ comes from the uniform distribution and $G_2$ is a particular piecewise linear function, so neither of the distributions are exponential.  Note that $(S^1,f)$ has zero topological entropy.  So in this case we can apply a modified version of Theorem~\ref{thm:cylinderEVL-HTS}, replacing the sequence $(u_n)_{n\in \N}$ in \eqref{eq:un-cylinders} with
$(u_{n_i})_{i\in \N}$ or $(u_{k_j})_{j\in \N}$ and making the corresponding changes to $(\omega_n)_{n\in \N}$ in \eqref{eq:wn-definition}.  This yields EVLs other than types 1-3.

\emph{Acknowledgements.}  
We would like to thank P.\ Varandas for encouragement and for useful comments. We are also obliged to M.\ Nicol for helpful suggestions.



\end{document}